\documentclass[12pt]{article}%
\usepackage{amsmath}
\usepackage{amsfonts}
\usepackage{amssymb}
\usepackage{graphicx}%
\setcounter{MaxMatrixCols}{30}
\providecommand{\U}[1]{\protect\rule{.1in}{.1in}}
\newtheorem{theorem}{Theorem}

\newtheorem{corollary}[theorem]{Corollary}

\newtheorem{proposition}[theorem]{Proposition}

\newenvironment{proof}[1][Proof]{\noindent\textbf{#1.} }{\ \rule{0.5em}{0.5em}}
\begin{document}

\title{Laguerre-Freud equations for Generalized Hahn polynomials of type I}
\author{Diego Dominici \thanks{e-mail: dominicd@newpaltz.edu}\\Department of Mathematics \\State University of New York at New Paltz \\1 Hawk Dr.\\New Paltz, NY 12561-2443 \\USA}
\maketitle

\begin{abstract}
We derive a system of difference equations satisfied by the three-term
recurrence coefficients of some families of discrete orthogonal polynomials.

\end{abstract}

\section{Introduction}

Let $\left\{  \mu_{n}\right\}  $ be a sequence of complex numbers and
$L:\mathbb{C}\left[  x\right]  \rightarrow\mathbb{C}$ be a linear functional
defined by%
\[
L\left[  x^{n}\right]  =\mu_{n},\quad n=0,1,\ldots.
\]
Then, $L$ is called the \emph{moment functional} determined by the formal
moment sequence $\left\{  \mu_{n}\right\}  $. The number $\mu_{n}$ is called
the \emph{moment} of order $n$. A sequence $\left\{  P_{n}\left(  x\right)
\right\}  \subset\mathbb{C}\left[  x\right]  ,$ of monic polynomials with
$\deg\left(  P_{n}\right)  =n$ is called an \emph{orthogonal polynomial
sequence} with respect to\ $L$ provided that \cite{MR0481884}%
\[
L\left[  P_{n}P_{m}\right]  =h_{n}\delta_{n,m},\quad n,m=0,1,\ldots,
\]
where $h_{n}\neq0$ and $\delta_{n,m}$ is Kronecker's delta.

Since
\[
L\left[  xP_{n}P_{k}\right]  =0,\quad k\notin\left\{  n-1,n,n+1\right\}  ,
\]
the monic orthogonal polynomials $P_{n}\left(  x\right)  $ satisfy the
\emph{three-term recurrence relation}
\begin{equation}
xP_{n}\left(  x\right)  =P_{n+1}\left(  x\right)  +\beta_{n}P_{n}\left(
x\right)  +\gamma_{n}P_{n-1}\left(  x\right)  , \label{3-term}%
\end{equation}
where%
\begin{equation}
\beta_{n}=\frac{1}{h_{n}}L\left[  xP_{n}^{2}\right]  ,\quad\gamma_{n}=\frac
{1}{h_{n-1}}L\left[  xP_{n}P_{n-1}\right]  . \label{betag}%
\end{equation}
If we define $P_{-1}\left(  x\right)  =0,$ $P_{0}\left(  x\right)  =1,$ we see
that
\begin{equation}
P_{1}\left(  x\right)  =x-\beta_{0}, \label{P1}%
\end{equation}
and%
\begin{equation}
P_{2}\left(  x\right)  =\left(  x-\beta_{1}\right)  \left(  x-\beta
_{0}\right)  -\gamma_{1}. \label{P2}%
\end{equation}
Because%
\[
L\left[  xP_{n}P_{n-1}\right]  =L\left[  P_{n}^{2}\right]  ,
\]
we have%
\begin{equation}
\gamma_{n}=\frac{h_{n}}{h_{n-1}},\quad n=1,2,\ldots, \label{gh}%
\end{equation}
and we define
\begin{equation}
\gamma_{0}=0. \label{g0}%
\end{equation}
Note that from (\ref{betag}) we get%
\begin{equation}
\beta_{0}=\frac{1}{h_{0}}L\left[  x\right]  =\frac{\mu_{1}}{\mu_{0}}.
\label{beta00}%
\end{equation}

If the coefficients $\beta_{n},\gamma_{n}$ are known, the recurrence
(\ref{3-term}) can be used to compute the polynomials $P_{n}\left(  x\right)
.$ Stability problems and numerical aspects arising in the calculations have
been studied by many authors \cite{MR0213062}, \cite{MR2383071},
\cite{MR0221789}, \cite{MR727118}.

If explicit representations of the polynomials $P_{n}\left(  x\right)  $ are
given, symbolic computation techniques can be applied to obtain recurrence
relations and, in particular, to find expressions for the coefficients
$\beta_{n},\gamma_{n}$ (see \cite{MR1766263}, \cite{MR2768529},
\cite{MR1379802}, \cite{MR0107725}, \cite{MR1090884}).

If, alas, the only knowledge we have is the linear functional $L,$ the
computation of $\beta_{n}$ and $\gamma_{n}$ is a real challenge. One
possibility is to use the Modified Chebyshev algorithm \cite[2.1.7]%
{MR2061539}. Another is to obtain recurrences for $\beta_{n},\gamma_{n}$ of
the form \cite{MR1773440}, \cite{MR2451211}%
\begin{align*}
\gamma_{n+1}  &  =F_{1}\left(  n,\gamma_{n},\gamma_{n-1},\ldots,\beta
_{n},\beta_{n-1},\ldots\right)  ,\\
\beta_{n+1}  &  =F_{2}\left(  n,\gamma_{n+1},\gamma_{n},\ldots,\beta_{n}%
,\beta_{n-1},\ldots\right)  ,
\end{align*}
for some functions $F_{1},F_{2}.$ This system of recurrences is known as the
\emph{Laguerre-Freud equations} \cite{MR0419895}, \cite{MR3363411}. The name
was coined by Alphonse Magnus as part of his work on Freud's conjecture
\cite{MR839005}, \cite{MR835728}, \cite{MR1340938}, \cite{MR1705232}. In terms
of performance, the Modified Chebyshev algorithm requires $O\left(
n^{2}\right)  $ operations, while the Laguerre-Freud equations require only
$O\left(  n\right)  $ operations for the computation of $\beta_{n}$ and
$\gamma_{n}$ \cite{MR1272122}.

\medskip There are several papers on the Laguerre-Freud equations for
different types of orthogonal polynomials including continuous
\cite{MR2016752}, \cite{MR2910834}, \cite{MR2556661}, discrete
\cite{MR2360113}, \cite{MR1737084}, \cite{MR842801}, \cite{MR2063533},
$D_{\omega}$ polynomials \cite{MR1662690}, \cite{MR2457103}, Laguerre-Hahn
\cite{MR3507140}, and $q$-polynomials \cite{MR3413410}.

Most of the known examples belong to the set of semiclassical orthogonal
polynomials \cite{MR1270222}, where the linear functional satisfies an
equation of the form%
\[
L\left[  \phi U\left(  \pi\right)  \right]  =L\left[  \lambda\pi\right]
,\quad\pi\in\mathbb{C}\left[  x\right]  ,
\]
called the \emph{Pearson equation }\cite{Pearson}, where $U:\mathbb{C}\left[
x\right]  \rightarrow\mathbb{C}\left[  x\right]  $ is a linear operator and
$\phi\left(  x\right)  ,$ $\lambda\left(  x\right)  $ are fixed polynomials.
The \emph{class} of the semiclassical orthogonal polynomials is defined by%
\[
c=\max\left\{  \deg\left(  \phi\right)  -2,\ \deg\left(  \phi-\lambda\right)
-1\right\}  .
\]

In this paper, we focus our attention on linear functionals defined by%
\begin{equation}
L\left[  f\right]  =%
{\displaystyle\sum\limits_{x=0}^{\infty}}
f(x)\rho\left(  x\right)  , \label{L}%
\end{equation}
where the weight function $\rho\left(  x\right)  $ is of the form%
\begin{equation}
\rho\left(  x\right)  =\frac{\left(  a_{1}\right)  _{x}\left(  a_{2}\right)
_{x}\cdots\left(  a_{p}\right)  _{x}}{\left(  b_{1}+1\right)  _{x}\left(
b_{2}+1\right)  _{x}\cdots\left(  b_{q}+1\right)  _{x}}\frac{z^{x}}{x!},
\label{weight}%
\end{equation}
and $\left(  a\right)  _{x}$ denotes the Pochhammer symbol (also called
shifted or rising factorial) defined by \cite[5.2.4]{MR2723248}
\begin{align*}
\left(  a\right)  _{0}  &  =1\\
\left(  a\right)  _{x}  &  =a\left(  a+1\right)  \cdots\left(  a+x-1\right)
,\quad x\in\mathbb{N},
\end{align*}
or by%
\[
\left(  a\right)  _{x}=\frac{\Gamma\left(  a+x\right)  }{\Gamma\left(
a\right)  },
\]
where $\Gamma\left(  z\right)  $ is the Gamma function. Note that we have%
\begin{equation}
\frac{\rho\left(  x+1\right)  }{\rho\left(  x\right)  }=\frac{\lambda\left(
x\right)  }{\phi\left(  x+1\right)  }, \label{ratio}%
\end{equation}
with%
\begin{align}
\lambda\left(  x\right)   &  =z\left(  x+a_{1}\right)  \left(  x+a_{2}\right)
\cdots\left(  x+a_{p}\right)  ,\label{phi-eta}\\
\phi\left(  x\right)   &  =x\left(  x+b_{1}\right)  \left(  x+b_{2}\right)
\cdots\left(  x+b_{q}\right)  .\nonumber
\end{align}
Hence, the weight function $\rho\left(  x\right)  $ satisfies an alternative
form of the Pearson equation\emph{ }
\begin{equation}
\Delta_{x}\left(  \phi\rho\right)  =\left(  \lambda-\phi\right)  \rho,
\label{Pearson}%
\end{equation}
where%
\[
\Delta_{x}f(x)=f(x+1)-f(x)
\]
is the forward difference operator. Using (\ref{ratio}) in (\ref{L}), we get
the Pearson equation%

\begin{equation}
L\left[  \lambda\left(  x\right)  \pi\left(  x\right)  \right]  =L\left[
\phi\left(  x\right)  \pi\left(  x-1\right)  \right]  ,\quad\pi\in
\mathbb{C}\left[  x\right]  . \label{PearsonL}%
\end{equation}

The rest of the paper is organized as follows: in Section 2 we use
(\ref{PearsonL}) and obtain two difference equations satisfied by the discrete
semiclassical orthogonal polynomials. As an example, we apply the method to
obtain the recurrence coefficients of the Meixner polynomials.

In Section 3, we derive the Laguerre-Freud equations for the Generalized Hahn
polynomials of type I, introduced in \cite{MR3227440} as part of the
classification of discrete semiclassical orthogonal polynomials of class one.
Specializing one of the parameters in the polynomials, we obtain the
recurrence coefficients of the Hahn polynomials.

We finish the paper with some remarks and future directions.

\section{Laguerre-Feud equations}

As Maroni remarks at the beginning of \cite{MR1711161}, \textquotedblleft the
history of finite-type relations is as old as the history of orthogonality
since%
\[
r(x)P_{n}(x)=%
{\displaystyle\sum\limits_{k=n-t}^{n+t}}
\lambda_{n,k}P_{k}(x),
\]
when $P_{n}(x)$ is a sequence of orthogonal polynomials and $r(x)$ is a
polynomial with $\deg\left(  r\right)  =t.$\textquotedblright\ The three-term
recurrence relation (\ref{3-term}) is the most used example, with $r(x)=x.$

We now derive difference equations for orthogonal polynomials whose linear
functional satisfies (\ref{PearsonL}). We follow an approach similar to the
one used in \cite{MR2957309} to find the Laguerre-Freud equations for the
generalized Charlier polynomials. Another method used in many articles is to
use ladder operators \cite{MR2745405}.

\begin{proposition}
Let $\left\{  P_{n}(x)\right\}  $ be a family of orthogonal polynomials with
respect to a linear functional satisfying (\ref{PearsonL}). Then, we have%
\begin{equation}
\lambda\left(  x\right)  P_{n}\left(  x+1\right)  =%
{\displaystyle\sum\limits_{k=-q-1}^{p}}
A_{k}\left(  n\right)  P_{n+k}\left(  x\right)  \label{DE1}%
\end{equation}
and%
\begin{equation}
\phi\left(  x\right)  P_{n}\left(  x-1\right)  =%
{\displaystyle\sum\limits_{k=-p}^{q+1}}
B_{k}\left(  n\right)  P_{n+k}\left(  x\right)  , \label{DE2}%
\end{equation}
for some coefficients $A_{k}\left(  n\right)  ,$ $B_{k}\left(  n\right)  .$
\end{proposition}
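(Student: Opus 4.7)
The plan is to expand the products $\lambda(x)P_{n}(x+1)$ and $\phi(x)P_{n}(x-1)$ in the basis $\{P_{m}(x)\}_{m\geq 0}$ and identify which coefficients must vanish. The upper bounds on the summation index $k$ come for free from degree counting: since $\deg(\lambda)=p$, the polynomial $\lambda(x)P_{n}(x+1)$ has degree $n+p$; since $\deg(\phi)=q+1$, the polynomial $\phi(x)P_{n}(x-1)$ has degree $n+q+1$. These bounds yield $k\leq p$ in (\ref{DE1}) and $k\leq q+1$ in (\ref{DE2}) immediately.

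The real work is in pinning down the lower limits, and here the Pearson equation (\ref{PearsonL}) does the job. By orthogonality, the coefficient in (\ref{DE1}) must be
\[
A_{k}(n)=\frac{1}{h_{n+k}}L[\lambda(x)P_{n}(x+1)P_{n+k}(x)].
\]
I would apply (\ref{PearsonL}) with the choice $\pi(x)=P_{n}(x+1)P_{n+k}(x)$, which shifts the expression inside $L$ to $L[\phi(x)P_{n}(x)P_{n+k}(x-1)]$. The factor $\phi(x)P_{n+k}(x-1)$ has degree $n+k+q+1$, so orthogonality of $P_{n}$ against polynomials of degree less than $n$ forces $A_{k}(n)=0$ whenever $n+k+q+1<n$, i.e. $k\leq -q-2$. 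Hence the sum starts at $k=-q-1$.

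For (\ref{DE2}), I would run the Pearson equation in the opposite direction. Starting from $B_{k}(n)=h_{n+k}^{-1}L[\phi(x)P_{n}(x-1)P_{n+k}(x)]$, I would select $\pi(x)=P_{n}(x)P_{n+k}(x+1)$ so that $\pi(x-1)=P_{n}(x-1)P_{n+k}(x)$. Then (\ref{PearsonL}) rewrites the integrand as $\lambda(x)P_{n}(x)P_{n+k}(x+1)$, and now $\lambda(x)P_{n+k}(x+1)$ has degree $n+k+p$. Orthogonality kills $B_{k}(n)$ whenever $n+k+p<n$, yielding $k\geq -p$ as claimed.

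The one subtlety to get right, and therefore the main obstacle, is choosing $\pi$ so that the shift built into the Pearson equation lands on the factor $P_{n+k}$ rather than on $P_{n}$. The orthogonality argument only succeeds if multiplication by the \emph{high-degree} polynomial ($\phi$ or $\lambda$) together with the shift is absorbed into the factor indexed by $n+k$, leaving $P_{n}$ free so that its orthogonality against lower-degree polynomials can be invoked to kill the inner product. Once this bookkeeping is done, the remainder of the argument is just the degree count carried out above.
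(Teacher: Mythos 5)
Your proposal is correct and follows essentially the same route as the paper: expand in the orthogonal basis, bound the degree from above by $\deg\lambda=p$ and $\deg\phi=q+1$, and use the Pearson relation (\ref{PearsonL}) with $\pi(x)=P_{n}(x+1)P_{n+k}(x)$ (resp.\ $\pi(x)=P_{n}(x)P_{n+k}(x+1)$) to transfer the shift so that $P_{n}(x)$ is paired against a polynomial of degree $n+k+q+1$ (resp.\ $n+k+p$), which orthogonality annihilates for $k$ below the stated lower limits. This is exactly the computation in the paper's proof, so no further comparison is needed.
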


\begin{proof}
Since $\deg\lambda\left(  x\right)  P_{n}\left(  x+1\right)  =n+p,$ we can
write%
\[
\lambda\left(  x\right)  P_{n}\left(  x+1\right)  =%
{\displaystyle\sum\limits_{k=-n}^{p}}
A_{k}\left(  n\right)  P_{n+k}\left(  x\right)  .
\]
Using orthogonality and (\ref{PearsonL}), we have%
\begin{align*}
h_{n+k}A_{k}\left(  n\right)   &  =L\left[  \lambda\left(  x\right)
P_{n}\left(  x+1\right)  P_{n+k}\left(  x\right)  \right] \\
&  =L\left[  \phi\left(  x\right)  P_{n}\left(  x\right)  P_{n+k}\left(
x-1\right)  \right]  =0,\quad k<-q-1.
\end{align*}

Similarly, writing%
\[
\phi\left(  x\right)  P_{n}\left(  x-1\right)  =%
{\displaystyle\sum\limits_{k=-n}^{q+1}}
B_{k}\left(  n\right)  P_{n+k}\left(  x\right)  ,
\]
we get%
\begin{align*}
h_{n+k}B_{k}\left(  n\right)   &  =L\left[  \phi\left(  x\right)  P_{n}\left(
x-1\right)  P_{n+k}\left(  x\right)  \right] \\
&  =L\left[  \lambda\left(  x\right)  P_{n}\left(  x\right)  P_{n+k}\left(
x+1\right)  \right]  =0,\quad k<-p.
\end{align*}

\end{proof}

The coefficients $A_{k}\left(  n\right)  $ and $B_{k}\left(  n\right)  $ are
not independent of each other.

\begin{corollary}%
\begin{equation}
A_{k}\left(  n\right)  =\frac{h_{n}}{h_{n+k}}B_{-k}\left(  n+k\right)
,\quad-q-1\leq k\leq p. \label{AB}%
\end{equation}

\end{corollary}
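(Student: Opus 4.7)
The plan is to show that both sides compute the same inner product, with the Pearson equation (\ref{PearsonL}) serving as the bridge. The identity is symmetric under the swap $(n,k) \leftrightarrow (n+k,-k)$ together with interchanging the roles of $\lambda$ and $\phi$, which is precisely what (\ref{PearsonL}) provides.

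First, I would read off from the proof of the Proposition the integral representation of $A_k(n)$, namely
\[
h_{n+k}A_k(n) = L\bigl[\lambda(x)\,P_n(x+1)\,P_{n+k}(x)\bigr],
\]
valid for $-q-1\le k \le p$ (so that $n+k\ge 0$ whenever $n$ is large enough, and the shift stays within the range where $B_{-k}(n+k)$ makes sense). Similarly, I would record
\[
h_{n+k}B_k(n) = L\bigl[\phi(x)\,P_n(x-1)\,P_{n+k}(x)\bigr],
\]
and then substitute $n\mapsto n+k$, $k\mapsto -k$ to obtain
\[
h_n B_{-k}(n+k) = L\bigl[\phi(x)\,P_{n+k}(x-1)\,P_n(x)\bigr].
\]

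The key step is to apply the Pearson equation (\ref{PearsonL}) with the test polynomial $\pi(x) = P_n(x+1)\,P_{n+k}(x)$, whose shift $\pi(x-1) = P_n(x)\,P_{n+k}(x-1)$ matches the integrand above. This yields
\[
L\bigl[\lambda(x)\,P_n(x+1)\,P_{n+k}(x)\bigr] = L\bigl[\phi(x)\,P_n(x)\,P_{n+k}(x-1)\bigr],
\]
so the two expressions for $h_{n+k}A_k(n)$ and $h_n B_{-k}(n+k)$ are equal. Dividing by $h_{n+k}$ then gives (\ref{AB}).

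There is no real obstacle here; the only minor subtlety is making sure that the admissible range of $k$ matches on both sides, i.e.\ that $-q-1\le k\le p$ corresponds, under $k\mapsto -k$, to $-p\le -k\le q+1$, which is exactly the range of summation in (\ref{DE2}) applied at index $n+k$. Hence the identity holds throughout the stated range.
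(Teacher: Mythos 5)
Your proof is correct and follows essentially the same route as the paper: both arguments rest on the representations $h_{n+k}A_{k}\left(n\right)=L\left[\lambda\left(x\right)P_{n}\left(x+1\right)P_{n+k}\left(x\right)\right]$ and the Pearson relation (\ref{PearsonL}), combined with orthogonality applied to the expansion (\ref{DE2}). The only difference is organizational — you precompute the functional representation of $B_{-k}\left(n+k\right)$ and then invoke (\ref{PearsonL}) once, whereas the paper expands $\phi\left(x\right)P_{n+k}\left(x-1\right)$ inside the functional and lets orthogonality pick out the $j=-k$ term.
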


\begin{proof}
If $-q-1\leq k\leq p,$ then
\begin{align*}
A_{k}\left(  n\right)   &  =\frac{1}{h_{n+k}}L\left[  \phi\left(  x\right)
P_{n}\left(  x\right)  P_{n+k}\left(  x-1\right)  \right] \\
&  =\frac{1}{h_{n+k}}L\left[  P_{n}\left(  x\right)
{\displaystyle\sum\limits_{j=-p}^{q+1}}
B_{j}\left(  n+k\right)  P_{n+k+j}\left(  x\right)  \right] \\
&  =\frac{1}{h_{n+k}}%
{\displaystyle\sum\limits_{j=-p}^{q+1}}
B_{j}\left(  n+k\right)  L\left[  P_{n}\left(  x\right)  P_{n+k+j}\left(
x\right)  \right] \\
&  =\frac{h_{n}}{h_{n+k}}B_{-k}\left(  n+k\right)  .
\end{align*}

\end{proof}

We can now state our main result.

\begin{theorem}
\label{Th1}For $-q-1\leq k\leq p,$ we have
\begin{align}
&  \gamma_{n+k+1}A_{k+1}\left(  n\right)  -\gamma_{n}A_{k+1}\left(
n-1\right)  +A_{k-1}\left(  n\right)  -A_{k-1}\left(  n+1\right) \label{req}\\
&  =\left(  \beta_{n}-\beta_{n+k}-1\right)  A_{k}\left(  n\right)  ,\nonumber
\end{align}
with%
\begin{equation}
A_{p}\left(  n\right)  =z, \label{Ap}%
\end{equation}%
\begin{equation}
A_{-q-1}\left(  n\right)  =\gamma_{n}\gamma_{n-1}\cdots\gamma_{n-q},
\label{Aq}%
\end{equation}
and%
\[
A_{p+1}\left(  n\right)  =0=A_{-q-2}\left(  n\right)  .
\]

\end{theorem}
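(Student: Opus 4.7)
The plan is to derive the recurrence (\ref{req}) by computing $x\lambda(x)P_n(x+1)$ in two different ways and matching coefficients of $P_{n+k}(x)$, then handle the boundary values $A_p(n)$ and $A_{-q-1}(n)$ via leading-coefficient considerations and the corollary (\ref{AB}).

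First I would write $x = (x+1) - 1$ and apply the three-term recurrence (\ref{3-term}) evaluated at $x+1$ to obtain
\[
x\lambda(x)P_{n}(x+1) = \lambda(x)\bigl[P_{n+1}(x+1) + (\beta_{n}-1)P_{n}(x+1) + \gamma_{n}P_{n-1}(x+1)\bigr].
\]
Expanding each of the three terms $\lambda(x)P_{m}(x+1)$ via (\ref{DE1}) with indices $n+1$, $n$, and $n-1$ gives one expression for $x\lambda(x)P_{n}(x+1)$ as a sum of the $P_{j}(x)$. On the other side, I would multiply (\ref{DE1}) by $x$ directly and apply the three-term recurrence to each $xP_{n+k}(x)$, producing a second such sum. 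Collecting coefficients of the polynomial $P_{n+k}(x)$ on both sides — on the left, $A_{k-1}(n+1) + (\beta_{n}-1)A_{k}(n) + \gamma_{n}A_{k+1}(n-1)$ (after shifting the summation index appropriately), and on the right, $A_{k-1}(n) + \beta_{n+k}A_{k}(n) + \gamma_{n+k+1}A_{k+1}(n)$ — and rearranging yields (\ref{req}) for every $k$ in the range $-q-1 \leq k \leq p$, provided we adopt the conventions $A_{p+1}(n) = 0 = A_{-q-2}(n)$.

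For the boundary value (\ref{Ap}), I would compare leading coefficients in (\ref{DE1}): since $\lambda(x) = zx^{p} + \cdots$ by (\ref{phi-eta}) and $P_{n}(x+1)$ is monic of degree $n$, the leading term on the left is $zx^{n+p}$, which must agree with $A_{p}(n)x^{n+p}$ on the right. For (\ref{Aq}), I would apply the corollary (\ref{AB}) with $k=-q-1$, giving
\[
A_{-q-1}(n) = \frac{h_{n}}{h_{n-q-1}}\,B_{q+1}(n-q-1),
\]
and determine $B_{q+1}(n-q-1)$ from the leading-coefficient comparison in (\ref{DE2}): $\phi(x)$ is monic of degree $q+1$, so the coefficient of $P_{n}(x)$ in the expansion of $\phi(x)P_{n-q-1}(x-1)$ is $1$. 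A telescoping product using (\ref{gh}) then converts $h_{n}/h_{n-q-1}$ into $\gamma_{n}\gamma_{n-1}\cdots\gamma_{n-q}$.

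The routine part is the algebra; the only delicate step is the bookkeeping when matching coefficients. In particular one must verify that the identity really holds at the extreme values $k=-q-1$ and $k=p$, which is where the conventions $A_{p+1}(n)=A_{-q-2}(n)=0$ are consumed, and one must be careful that the shifts of the summation index (replacing $j\mapsto k-1$ in the $P_{n+j+1}$ terms and $j\mapsto k+1$ in the $P_{n+j-1}$ terms) do not silently drop contributions at the boundaries. I expect this indexing hygiene to be the main obstacle, but it is mechanical once the two expressions for $x\lambda(x)P_{n}(x+1)$ are written out explicitly.
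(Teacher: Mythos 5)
Your proposal is correct, and for the recurrence (\ref{req}) and the value (\ref{Ap}) it is essentially the paper's own argument: writing $x=(x+1)-1$ and equating the two expansions of $x\lambda(x)P_{n}(x+1)$ is just a rearrangement of the identity $\lambda(x)P_{n}(x+1)=(x+1)\lambda(x)P_{n}(x+1)-x\lambda(x)P_{n}(x+1)$ used in the paper, and the leading-coefficient argument for $A_{p}(n)=z$ is identical. Where you genuinely differ is (\ref{Aq}): the paper extracts the extreme coefficient identity at $k=-q-2$, namely $\gamma_{n}A_{-q-1}(n-1)=\gamma_{n-q-1}A_{-q-1}(n)$, telescopes this first-order difference equation, and anchors it with the single value $A_{-q-1}(q+1)=\frac{h_{q+1}}{h_{0}}B_{q+1}(0)$ obtained from (\ref{AB}); you instead apply (\ref{AB}) with $k=-q-1$ at every $n$, use $B_{q+1}(n)=1$ (monicity of $\phi(x)P_{n}(x-1)$, as in (\ref{Bq})) and telescope (\ref{gh}) to convert $h_{n}/h_{n-q-1}$ into $\gamma_{n}\gamma_{n-1}\cdots\gamma_{n-q}$. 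Your route is shorter and avoids solving the auxiliary difference equation, at the cost of not exhibiting the $k=-q-2$ consistency relation that the paper's derivation produces as a by-product; both arguments carry the same implicit restriction (effectively $n\geq q+1$, the smaller $n$ being covered by the convention $\gamma_{0}=0$ and the finite range of the expansion), which the paper glosses over in the same way. Your bookkeeping of the index shifts and of where the conventions $A_{p+1}(n)=0=A_{-q-2}(n)$ are consumed (at $k=p$ and $k=-q-1$) is accurate.
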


\begin{proof}
Using (\ref{3-term}), we have
\begin{gather*}
\lambda\left(  x\right)  \left(  x+1\right)  P_{n}\left(  x+1\right)
=\lambda\left(  x\right)  P_{n+1}\left(  x+1\right) \\
+\beta_{n}\lambda\left(  x\right)  P_{n}\left(  x+1\right)  +\gamma_{n}%
\lambda\left(  x\right)  P_{n-1}\left(  x+1\right)  ,
\end{gather*}
and from (\ref{DE1})
\begin{gather}
\lambda\left(  x\right)  \left(  x+1\right)  P_{n}\left(  x+1\right)  =%
{\displaystyle\sum\limits_{k=-q}^{p+1}}
A_{k-1}\left(  n+1\right)  P_{n+k}\left(  x\right) \label{eq1}\\
+%
{\displaystyle\sum\limits_{k=-q-1}^{p}}
\beta_{n}A_{k}\left(  n\right)  P_{n+k}\left(  x\right)  +%
{\displaystyle\sum\limits_{k=-q-2}^{p-1}}
\gamma_{n}A_{k+1}\left(  n-1\right)  P_{n+k}\left(  x\right)  .\nonumber
\end{gather}

On the other hand, if we multiply (\ref{DE1}) by $x,$ we get
\[
\lambda\left(  x\right)  xP_{n}\left(  x+1\right)  =%
{\displaystyle\sum\limits_{k=-q-1}^{p}}
A_{k}\left(  n\right)  xP_{n+k}\left(  x\right)  ,
\]
and using (\ref{3-term}) we obtain%
\begin{gather}
\lambda\left(  x\right)  xP_{n}\left(  x+1\right)  =%
{\displaystyle\sum\limits_{k=-q}^{p+1}}
A_{k-1}\left(  n\right)  P_{n+k}\left(  x\right) \label{eq2}\\
+%
{\displaystyle\sum\limits_{k=-q-1}^{p}}
\beta_{n+k}A_{k}\left(  n\right)  P_{n+k}\left(  x\right)  +%
{\displaystyle\sum\limits_{k=-q-2}^{p-1}}
\gamma_{n+k+1}A_{k+1}\left(  n\right)  P_{n+k}\left(  x\right)  .\nonumber
\end{gather}
Using (\ref{DE1}), (\ref{eq1}) and (\ref{eq2}) in the identity%
\[
\lambda\left(  x\right)  P_{n}\left(  x+1\right)  =\left(  x+1\right)
\lambda\left(  x\right)  P_{n}\left(  x+1\right)  -x\lambda\left(  x\right)
P_{n}\left(  x+1\right)  ,
\]
we have%
\begin{align*}%
{\displaystyle\sum\limits_{k=-q-1}^{p}}
A_{k}\left(  n\right)  P_{n+k}\left(  x\right)   &  =%
{\displaystyle\sum\limits_{k=-q}^{p+1}}
\left[  A_{k-1}\left(  n+1\right)  -A_{k-1}\left(  n\right)  \right]
P_{n+k}\left(  x\right) \\
&  +%
{\displaystyle\sum\limits_{k=-q-1}^{p}}
\left(  \beta_{n}-\beta_{n+k}\right)  A_{k}\left(  n\right)  P_{n+k}\left(
x\right) \\
&  +%
{\displaystyle\sum\limits_{k=-q-2}^{p-1}}
\left[  \gamma_{n}A_{k+1}\left(  n-1\right)  -\gamma_{n+k+1}A_{k+1}\left(
n\right)  \right]  P_{n+k}\left(  x\right)  .
\end{align*}
Since the polynomials $P_{n}\left(  x\right)  $ are linearly independent, we
get:%
\begin{equation}
k=p+1:\quad A_{p}\left(  n+1\right)  -A_{p}\left(  n\right)  =0, \label{k0}%
\end{equation}%
\begin{equation}
k=-q-2:\quad\gamma_{n}A_{-q-1}\left(  n-1\right)  -\gamma_{n-q-1}%
A_{-q-1}\left(  n\right)  =0, \label{k1}%
\end{equation}
and for $-q-1\leq k\leq p,$
\begin{align*}
\left(  1+\beta_{n+k}-\beta_{n}\right)  A_{k}\left(  n\right)   &
=A_{k-1}\left(  n+1\right)  -A_{k-1}\left(  n\right) \\
&  +\gamma_{n}A_{k+1}\left(  n-1\right)  -\gamma_{n+k+1}A_{k+1}\left(
n\right)  .
\end{align*}

Comparing leading coefficients in (\ref{DE1}) we obtain
\[
A_{p}\left(  n\right)  =z,
\]
in agreement with (\ref{k0}).

Rewriting (\ref{k1}) as
\[
\frac{A_{-q-1}\left(  n\right)  }{A_{-q-1}\left(  n-1\right)  }=\frac
{\gamma_{n}}{\gamma_{n-q-1}},
\]
we see that%
\[
\frac{A_{-q-1}\left(  n\right)  }{A_{-q-1}\left(  q+1\right)  }=\frac
{\gamma_{n}\gamma_{n-1}\cdots\gamma_{n-q}}{\gamma_{1}\gamma_{2}\cdots
\gamma_{q+1}}.
\]
From (\ref{AB}) we have%
\[
A_{-q-1}\left(  q+1\right)  =\frac{h_{q+1}}{h_{0}}B_{q+1}\left(  0\right)  .
\]
Since $\phi\left(  x\right)  P_{n}\left(  x-1\right)  $ is a monic polynomial,
(\ref{DE2}) gives%
\begin{equation}
B_{q+1}\left(  n\right)  =1, \label{Bq}%
\end{equation}
and using (\ref{gh}) we get%
\[
\frac{h_{q+1}}{h_{0}}B_{q+1}\left(  0\right)  =\gamma_{1}\gamma_{2}%
\cdots\gamma_{q+1},
\]
proving (\ref{Aq}).
\end{proof}

\subsection{Meixner polynomials}

To illustrate the use of Theorem \ref{Th1}, we consider the family of Meixner
polynomials introduced by Josef Meixner in \cite{MR1574715}. These polynomials
are orthogonal with respect to the weight function%
\[
\rho\left(  x\right)  =\left(  a\right)  _{x}\frac{z^{x}}{x!},
\]
and using (\ref{phi-eta}) we have%
\[
\lambda\left(  x\right)  =z\left(  x+a\right)  ,\quad\phi\left(  x\right)
=x,
\]
and $p=1,\quad q=0.$

From (\ref{Ap}) and (\ref{Aq}) we get%
\begin{equation}
A_{1}\left(  n\right)  =z,\quad A_{-1}\left(  n\right)  =\gamma_{n},
\label{A1Meixner}%
\end{equation}
while (\ref{req}) gives:%

\[
k=1:\quad\left(  1+\beta_{n+1}-\beta_{n}\right)  A_{1}\left(  n\right)
=A_{0}\left(  n+1\right)  -A_{0}\left(  n\right)  ,
\]

\[
k=0:\quad A_{0}\left(  n\right)  =A_{-1}\left(  n+1\right)  -A_{-1}\left(
n\right)  +\gamma_{n}A_{1}\left(  n-1\right)  -\gamma_{n+1}A_{1}\left(
n\right)  ,
\]

and%
\[
k=-1:\quad\left(  1+\beta_{n-1}-\beta_{n}\right)  A_{-1}\left(  n\right)
=\gamma_{n}A_{0}\left(  n-1\right)  -\gamma_{n}A_{0}\left(  n\right)  .
\]
Using (\ref{A1Meixner}) we obtain%
\begin{equation}
z\left(  1+\beta_{n+1}-\beta_{n}\right)  =A_{0}\left(  n+1\right)
-A_{0}\left(  n\right)  , \label{M1}%
\end{equation}%
\begin{equation}
A_{0}\left(  n\right)  =\gamma_{n+1}-\gamma_{n}+z\left(  \gamma_{n}%
-\gamma_{n+1}\right)  =\left(  1-z\right)  \left(  \gamma_{n+1}-\gamma
_{n}\right)  , \label{M2}%
\end{equation}
and%
\begin{equation}
1+\beta_{n-1}-\beta_{n}=A_{0}\left(  n-1\right)  -A_{0}\left(  n\right)  .
\label{M3}%
\end{equation}

Summing (\ref{M1}) from $n=0$ and (\ref{M3}) from $n=1,$ we get%
\begin{align*}
z\left(  \beta_{n}-\beta_{0}+n\right)   &  =A_{0}\left(  n\right)
-A_{0}\left(  0\right)  ,\\
\beta_{n}-\beta_{0}-n  &  =A_{0}\left(  n\right)  -A_{0}\left(  0\right)  .
\end{align*}
Using (\ref{M2}) and (\ref{g0}), gives%
\[
\beta_{n}-\beta_{0}-n=z\left(  \beta_{n}-\beta_{0}+n\right)  =\left(
1-z\right)  \left(  \gamma_{n+1}-\gamma_{n}-\gamma_{1}\right)  .
\]
Therefore,%
\[
\beta_{n}=\beta_{0}+\frac{1+z}{1-z}n,
\]
and%
\begin{equation}
\gamma_{n+1}-\gamma_{n}-\gamma_{1}=\frac{2nz}{\left(  1-z\right)  ^{2}}.
\label{M4}%
\end{equation}
Summing (\ref{M4}) from $n=0,$ we conclude that%
\[
\gamma_{n}=n\gamma_{1}+\frac{n\left(  n-1\right)  z}{\left(  1-z\right)  ^{2}%
}.
\]

If we use (\ref{A1Meixner}) and (\ref{M2}) in (\ref{DE1}), we get%
\begin{gather}
z\left(  x+a\right)  P_{n}\left(  x+1\right)  =\gamma_{n}P_{n-1}\left(
x\right) \label{DEM1}\\
+\left(  1-z\right)  \left(  \gamma_{n+1}-\gamma_{n}\right)  P_{n}\left(
x\right)  +zP_{n+1}\left(  x\right)  ,\nonumber
\end{gather}
and using (\ref{AB}),%
\begin{align*}
B_{1}\left(  n\right)   &  =\frac{h_{n}}{h_{n+1}}A_{-1}\left(  n+1\right)
=\frac{A_{-1}\left(  n+1\right)  }{\gamma_{n+1}}=1,\\
B_{0}\left(  n\right)   &  =A_{0}\left(  n\right)  =\left(  1-z\right)
\left(  \gamma_{n+1}-\gamma_{n}\right)  ,\\
B_{-1}\left(  n\right)   &  =\frac{h_{n}}{h_{n-1}}A_{1}\left(  n-1\right)
=\gamma_{n}z.
\end{align*}
Hence, from (\ref{DE2}) we obtain%
\begin{equation}
xP_{n}\left(  x-1\right)  =z\gamma_{n}P_{n-1}\left(  x\right)  +\left(
1-z\right)  \left(  \gamma_{n+1}-\gamma_{n}\right)  P_{n}\left(  x\right)
+P_{n+1}\left(  x\right)  . \label{DEM2}%
\end{equation}
Setting $n=0$ in (\ref{DEM1}) and (\ref{DEM2}) gives%
\begin{align*}
z\left(  x+a\right)   &  =\left(  1-z\right)  \gamma_{1}+z\left(  x-\beta
_{0}\right)  ,\\
x  &  =\left(  1-z\right)  \gamma_{1}+x-\beta_{0},
\end{align*}
from which we find%
\[
\left(  1-z\right)  \gamma_{1}=\beta_{0}=-a+\frac{1-z}{z}\gamma_{1},
\]
and therefore%
\[
\beta_{0}=\frac{az}{1-z},\quad\gamma_{1}=\frac{az}{\left(  1-z\right)  ^{2}}.
\]

Thus, we recover the well known coefficients \cite[18.22.2 ]{MR2723248}%
\begin{equation}
\beta_{n}=\frac{n+\left(  n+a\right)  z}{1-z},\quad\gamma_{n}=\frac{n\left(
n+a-1\right)  z}{\left(  1-z\right)  ^{2}}. \label{bgM}%
\end{equation}
Using the hypergeometric representation \cite[18.20.7 ]{MR2723248}%
\[
P_{n}\left(  x\right)  =\left(  a\right)  _{n}\left(  1-\frac{1}{z}\right)
^{-n}\ _{2}F_{1}\left[
\begin{array}
[c]{c}%
-n,\ -x\\
a
\end{array}
;1-\frac{1}{z}\right]  ,
\]
one can easily verify (or re-derive) (\ref{bgM}) using (for instance) the
Mathematica package HolonomicFunctions \cite{MR3624284}.

\section{Generalized Hahn polynomials of type I}

The Generalized Hahn polynomials of type I were introduced in \cite{MR3227440}%
. They are orthogonal with respect to the weight function%
\[
\rho\left(  x\right)  =\frac{\left(  a_{1}\right)  _{x}\left(  a_{2}\right)
_{x}}{\left(  b+1\right)  _{x}}\frac{z^{x}}{x!},\quad\left\vert z\right\vert
<1,\quad b\neq-1,-2,\ldots.
\]
The first moments are given by%
\begin{align}
\mu_{0}  &  =\ _{2}F_{1}\left[
\begin{array}
[c]{c}%
a_{1},\ a_{2}\\
b+1
\end{array}
;z\right]  ,\label{GHmoments}\\
\mu_{1}  &  =z\frac{a_{1}a_{2}}{b+1}\ _{2}F_{1}\left[
\begin{array}
[c]{c}%
a_{1}+1,\ a_{2}+1\\
b+2
\end{array}
;z\right]  .\nonumber
\end{align}

Since
\[
\frac{\rho\left(  x+1\right)  }{\rho\left(  x\right)  }=\frac{z\left(
x+a_{1}\right)  \left(  x+a_{2}\right)  }{\left(  x+1\right)  \left(
x+b+1\right)  },
\]
we have%
\[
\lambda\left(  x\right)  =z\left(  x+a_{1}\right)  \left(  x+a_{2}\right)
,\quad\phi\left(  x\right)  =x\left(  x+b\right)  ,
\]
and $p=2,\quad q=1.$

We can now derive the Laguerre-Freud equations for the Generalized Hahn
polynomials of type I.

\begin{theorem}
The recurrence coefficients of the Generalized Hahn polynomials of type I
satisfy the Laguerre-Freud equations%
\begin{equation}
\left(  1-z\right)  \nabla_{n}\left(  \gamma_{n+1}+\gamma_{n}\right)
=zv_{n}\nabla_{n}\left(  \beta_{n}+n\right)  -u_{n}\nabla_{n}\left(  \beta
_{n}-n\right)  , \label{LF1}%
\end{equation}%
\begin{equation}
\Delta_{n}\nabla_{n}\left[  \left(  u_{n}-zv_{n}\right)  \gamma_{n}\right]
=u_{n}\nabla_{n}\left(  \beta_{n}-n\right)  +\nabla_{n}\left(  \gamma
_{n+1}+\gamma_{n}\right)  . \label{LF2}%
\end{equation}
with initial conditions $\beta_{0}=\frac{\mu_{1}}{\mu_{0}}$ and
\begin{equation}
\gamma_{1}=\frac{\left(  a_{1}+a_{2}-b\right)  \beta_{0}+a_{1}a_{2}}%
{1-z}-\left(  \beta_{0}+a_{1}\right)  \left(  \beta_{0}+a_{2}\right)  ,
\label{GHg1}%
\end{equation}
where
\begin{align*}
u_{n}  &  =\beta_{n}+\beta_{n-1}-n+b+1,\\
v_{n}  &  =\beta_{n}+\beta_{n-1}+n-1+a_{1}+a_{2},
\end{align*}
and%
\[
\nabla_{x}f(x)=f(x)-f(x-1).
\]

\end{theorem}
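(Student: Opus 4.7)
The plan is to specialize Theorem \ref{Th1} to the Generalized Hahn polynomials of type I, for which $p=2$ and $q=1$, and then to reorganize the five instances of (\ref{req}), one for each $k\in\{-2,-1,0,1,2\}$, into the two Laguerre-Freud relations (\ref{LF1}) and (\ref{LF2}).

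The first step is to identify the coefficients $A_k(n)$ as explicitly as possible. From (\ref{Ap}) and (\ref{Aq}) one has $A_2(n)=z$ and $A_{-2}(n)=\gamma_n\gamma_{n-1}$. For the adjacent coefficients $A_1(n)$ and $A_{-1}(n)$, I would compare the subleading coefficients on both sides of (\ref{DE1}) and (\ref{DE2}), using the standard consequence of the three-term recurrence that the $x^{n-1}$ coefficient of the monic polynomial $P_n(x)$ equals $-(\beta_0+\beta_1+\cdots+\beta_{n-1})$. A direct computation then gives $A_1(n)=zv_{n+1}$, and combining the analogous calculation for $B_1(n)$ with (\ref{AB}) gives $A_{-1}(n)=\gamma_n u_n$.

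The second step is to examine (\ref{req}) at each admissible value of $k$. The cases $k=\pm2$ reduce, after substitution of the known forms of $A_{\pm1}$ and $A_{\pm2}$, to identities that hold automatically from the definitions of $u_n$ and $v_n$, so they contribute no new information. The three informative equations are the following: the $k=0$ case yields $A_0(n)$ in closed form as $A_0(n)=\Delta_n\bigl[\gamma_n(u_n-zv_n)\bigr]$; the $k=-1$ case, after cancelling the common factor $\gamma_n$, becomes $\nabla_n A_0(n)=\nabla_n(\gamma_{n+1}+\gamma_n)+u_n\nabla_n(\beta_n-n)$; and the $k=1$ case, after the shift $n+1\mapsto n$, becomes $\nabla_n A_0(n)=z\nabla_n(\gamma_{n+1}+\gamma_n)+zv_n\nabla_n(\beta_n+n)$.

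The third step assembles these into the theorem. Applying $\nabla_n$ to the closed-form expression for $A_0(n)$ gives $\nabla_n A_0(n)=\Delta_n\nabla_n[\gamma_n(u_n-zv_n)]$, and equating this with the $k=-1$ expression yields (\ref{LF2}); equating the $k=-1$ and shifted $k=1$ expressions for $\nabla_n A_0(n)$ eliminates $A_0$ entirely and yields (\ref{LF1}). For the initial conditions, $\beta_0=\mu_1/\mu_0$ is exactly (\ref{beta00}), and $\gamma_1$ is obtained by applying (\ref{PearsonL}) to $\pi\equiv1$ to derive a linear relation among $\mu_0,\mu_1,\mu_2$, solving for $\mu_2$, and substituting into $\gamma_1=\mu_2/\mu_0-\beta_0^2$, which after elementary algebra recovers (\ref{GHg1}). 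I expect the main obstacle to be purely algebraic bookkeeping in the subleading-coefficient identification of $A_1,A_{-1}$ and in recasting the three informative recurrences in the compact $\Delta_n,\nabla_n$ form used in the statement; no further ideas beyond this step are needed.
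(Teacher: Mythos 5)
Your proposal is correct, and its overall architecture coincides with the paper's: specialize Theorem \ref{Th1} to $p=2$, $q=1$, identify $A_{\pm2}$ from (\ref{Ap})--(\ref{Aq}), show $A_1(n)=zv_{n+1}$ and $A_{-1}(n)=\gamma_n u_n$, rewrite the $k=1,0,-1$ instances of (\ref{req}) as three expressions involving $A_0$, and eliminate $A_0$ to obtain (\ref{LF1})--(\ref{LF2}). Where you genuinely diverge is in how the auxiliary data are pinned down. For $A_1$ and $A_{-1}$ you compare subleading ($x^{n+1}$) coefficients in (\ref{DE1}) and (\ref{DE2}), using the fact that the $x^{n-1}$ coefficient of the monic $P_n$ is $-(\beta_0+\cdots+\beta_{n-1})$, and then transfer $B_1$ to $A_{-1}$ via (\ref{AB}); this is a clean, self-contained computation, and it correctly renders the $k=\pm2$ equations (\ref{GH1}), (\ref{GHm2}) into automatic identities (I checked: with $A_1=zv_{n+1}$, $A_{-1}=\gamma_nu_n$, $A_{-2}=\gamma_n\gamma_{n-1}$ both reduce to $u_{n-1}-u_n=1+\beta_{n-2}-\beta_n$ and its analogue for $v$). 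The paper instead treats $k=\pm2$ as the source of information: it telescopes (\ref{GH1}) and (\ref{GHm2}) and fixes the constants by evaluating (\ref{DE1}), (\ref{DE2}) at $n=0$ against $P_1,P_2$, which yields (\ref{GHA1}) and (\ref{GHAm1}) without ever expanding subleading coefficients, and which produces $A_0(0)$, $B_0(0)$ as by-products. That by-product is what the paper uses for the initial condition: equating (\ref{A00GH}) with $A_0(0)=B_0(0)$ from (\ref{AB}) gives (\ref{GHb0}), hence (\ref{GHg1}). Your alternative—apply (\ref{PearsonL}) with $\pi\equiv1$ to get $(1-z)\mu_2=\left[z(a_1+a_2)-b\right]\mu_1+za_1a_2\mu_0$ and insert into $\gamma_1=\mu_2/\mu_0-\beta_0^2$—is equally valid and indeed reproduces (\ref{GHg1}) after simplification; it has the minor advantage of not needing the $A_0(0)=B_0(0)$ identity, at the cost of a separate moment computation that the paper's route gets for free from machinery it has already set up.
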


\begin{proof}
From (\ref{Ap}) and (\ref{Aq}), we get%
\begin{equation}
A_{2}\left(  n\right)  =z,\quad A_{-2}\left(  n\right)  =\gamma_{n}%
\gamma_{n-1}, \label{A2GHahn}%
\end{equation}
while (\ref{req}) gives:%
\begin{equation}
k=2:\quad A_{1}\left(  n+1\right)  -A_{1}\left(  n\right)  =z\left(
1+\beta_{n+2}-\beta_{n}\right)  , \label{GH1}%
\end{equation}%
\begin{equation}%
\begin{array}
[c]{cc}%
k=1: & A_{0}\left(  n+1\right)  -A_{0}\left(  n\right)  =A_{1}\left(
n\right)  \left(  1+\beta_{n+1}-\beta_{n}\right)  +z\left(  \gamma
_{n+2}-\gamma_{n}\right)  ,\\
k=0: & A_{-1}\left(  n+1\right)  -A_{-1}\left(  n\right)  =A_{0}\left(
n\right)  +A_{1}\left(  n\right)  \gamma_{n+1}-A_{1}\left(  n-1\right)
\gamma_{n},\\
k=-1: & A_{-2}\left(  n+1\right)  -A_{-2}\left(  n\right) \\
& =A_{-1}\left(  n\right)  \left(  1+\beta_{n-1}-\beta_{n}\right)  +\gamma
_{n}\left[  A_{0}\left(  n\right)  -A_{0}\left(  n-1\right)  \right]  ,
\end{array}
\label{GHA01}%
\end{equation}
and%
\begin{equation}
k=-2:\quad A_{-2}\left(  n\right)  \left(  1+\beta_{n-2}-\beta_{n}\right)
=A_{-1}\left(  n-1\right)  \gamma_{n}-A_{-1}\left(  n\right)  \gamma_{n-1}.
\label{GHm2}%
\end{equation}

Solving (\ref{GH1}) we get%
\begin{equation}
A_{1}\left(  n\right)  =A_{1}\left(  0\right)  +z\left(  \beta_{n+1}+\beta
_{n}+n-\beta_{0}-\beta_{1}\right)  . \label{A1GH}%
\end{equation}
Setting $n=0$ in (\ref{DE1}) we have
\[
z\left(  x+a_{1}\right)  \left(  x+a_{2}\right)  =A_{0}\left(  0\right)
+A_{1}\left(  0\right)  P_{1}\left(  x\right)  +zP_{2}\left(  x\right)  ,
\]
and using (\ref{P1})-(\ref{P2}), we get%
\begin{equation}
A_{0}\left(  0\right)  =z\left[  a_{1}a_{2}+\gamma_{1}+\left(  a_{1}%
+a_{2}\right)  \beta_{0}+\beta_{0}^{2}\right]  , \label{A00GH}%
\end{equation}
and%
\begin{equation}
A_{1}\left(  0\right)  =z\left(  a_{1}+a_{2}+\beta_{0}+\beta_{1}\right)  .
\label{A10GH}%
\end{equation}
Using (\ref{A10GH}) in (\ref{A1GH}), we obtain%
\begin{equation}
A_{1}\left(  n\right)  =z\left(  \beta_{n+1}+\beta_{n}+n+a_{1}+a_{2}\right)  .
\label{GHA1}%
\end{equation}

If we use (\ref{A2GHahn}) in (\ref{GHm2}), we get%
\[
1+\beta_{n-2}-\beta_{n}=\frac{A_{-1}\left(  n-1\right)  }{\gamma_{n-1}}%
-\frac{A_{-1}\left(  n\right)  }{\gamma_{n}},
\]
and summing from $n=2$ we see that%
\begin{equation}
n-1+\beta_{0}+\beta_{1}-\beta_{n-1}-\beta_{n}=\frac{A_{-1}\left(  1\right)
}{\gamma_{1}}-\frac{A_{-1}\left(  n\right)  }{\gamma_{n}}. \label{GHAm11}%
\end{equation}
Setting $n=0$ in (\ref{DE2}), we have%
\[
x\left(  x+b\right)  =\left(  x-\beta_{1}\right)  \left(  x-\beta_{0}\right)
-\gamma_{1}+B_{1}\left(  0\right)  \left(  x-\beta_{0}\right)  +B_{0}\left(
0\right)
\]
and hence%
\begin{equation}
B_{1}\left(  0\right)  =\beta_{0}+\beta_{1}+b, \label{B10GH}%
\end{equation}%
\begin{equation}
B_{0}\left(  0\right)  =\beta_{0}^{2}+b\beta_{0}+\gamma_{1}. \label{B00GH}%
\end{equation}
Using (\ref{AB}) with $k=-1$ and (\ref{B10GH}), we obtain
\begin{equation}
A_{-1}\left(  1\right)  =\gamma_{1}B_{1}\left(  0\right)  =\gamma_{1}\left(
\beta_{0}+\beta_{1}+b\right)  . \label{GHAm12}%
\end{equation}
Combining (\ref{GHAm11}) and (\ref{GHAm12}), we conclude that
\begin{equation}
A_{-1}\left(  n\right)  =\gamma_{n}\left(  \beta_{n}+\beta_{n-1}-n+b+1\right)
. \label{GHAm1}%
\end{equation}

If we introduce the functions%
\begin{align*}
u_{n}  &  =\frac{A_{-1}\left(  n\right)  }{\gamma_{n}}=\beta_{n}+\beta
_{n-1}-n+b+1,\\
v_{n}  &  =\frac{A_{1}\left(  n-1\right)  }{z}=\beta_{n}+\beta_{n-1}%
+n-1+a_{1}+a_{2},
\end{align*}
and use (\ref{GHA1}),(\ref{GHAm1}) in (\ref{GHA01}), we get%
\begin{align}
\nabla_{n}A_{0}  &  =zv_{n}\nabla_{n}\left(  \beta_{n}+n\right)  +z\nabla
_{n}\left(  \gamma_{n+1}+\gamma_{n}\right)  ,\nonumber\\
A_{0}  &  =\Delta_{n}\left[  \left(  u_{n}-zv_{n}\right)  \gamma_{n}\right]
,\label{GHA0}\\
\nabla_{n}A_{0}  &  =u_{n}\nabla_{n}\left(  \beta_{n}-n\right)  +\nabla
_{n}\left(  \gamma_{n+1}+\gamma_{n}\right)  .\nonumber
\end{align}
Using (\ref{AB}) with $k=0$ and (\ref{B00GH}), we obtain%
\begin{equation}
A_{0}\left(  0\right)  =B_{0}\left(  0\right)  =\beta_{0}^{2}+b\beta
_{0}+\gamma_{1}. \label{GHA002}%
\end{equation}
From (\ref{A00GH}) and (\ref{GHA002}) we have%
\begin{equation}
\left(  1-z\right)  \left[  \gamma_{1}+\left(  \beta_{0}+a_{1}\right)  \left(
\beta_{0}+a_{2}\right)  \right]  =\left(  a_{1}+a_{2}-b\right)  \beta
_{0}+a_{1}a_{2}. \label{GHb0}%
\end{equation}

Finally, if we eliminate $A_{0}$ from (\ref{GHA0}), we conclude that%
\[
zv_{n}\nabla_{n}\left(  \beta_{n}+n\right)  +z\nabla_{n}\left(  \gamma
_{n+1}+\gamma_{n}\right)  =u_{n}\nabla_{n}\left(  \beta_{n}-n\right)
+\nabla_{n}\left(  \gamma_{n+1}+\gamma_{n}\right)
\]
and%
\begin{align*}
&  \Delta_{n}\left[  \left(  u_{n}-zv_{n}\right)  \gamma_{n}\right]
-\Delta_{n}\left[  \left(  u_{n-1}-zv_{n-1}\right)  \gamma_{n-1}\right] \\
&  =u_{n}\nabla_{n}\left(  \beta_{n}-n\right)  +\nabla_{n}\left(  \gamma
_{n+1}+\gamma_{n}\right)
\end{align*}
or
\[
\Delta_{n}\nabla_{n}\left[  \left(  u_{n}-zv_{n}\right)  \gamma_{n}\right]
=u_{n}\nabla_{n}\left(  \beta_{n}-n\right)  +\nabla_{n}\left(  \gamma
_{n+1}+\gamma_{n}\right)  .
\]

\end{proof}

\subsection{Hahn polynomials}

We now consider the case $z=1.$ Under the assumptions
\[
\operatorname{Re}\left(  b-a_{1}-a_{2}\right)  >0,\quad b-a_{1}-a_{2}%
\neq1,2,\ldots,
\]
the first two moments (\ref{GHmoments}) are given by \cite[15.4(ii)]%
{MR2723248}%
\begin{align*}
\mu_{0}  &  =\frac{\Gamma\left(  b+1\right)  \Gamma\left(  b+1-a_{1}%
-a_{2}\right)  }{\Gamma\left(  b+1-a_{1}\right)  \Gamma\left(  b+1-a_{2}%
\right)  },\\
\mu_{1}  &  =\frac{a_{1}a_{2}}{b+1}\frac{\Gamma\left(  b+2\right)
\Gamma\left(  b-a_{1}-a_{2}\right)  }{\Gamma\left(  b-a_{1}\right)
\Gamma\left(  b-a_{2}\right)  }.
\end{align*}
Hence,%
\begin{equation}
\beta_{0}=\frac{\mu_{1}}{\mu_{0}}=\frac{a_{1}a_{2}}{b-a_{1}-a_{2}}.
\label{beta0}%
\end{equation}
Note that we get the same result if we set $z=1$ in (\ref{GHb0}).

Taking limits in (\ref{GHg1}) as $z\rightarrow1^{-},$ we obtain%
\[
\gamma_{1}=\frac{a_{1}a_{2}\left(  b-a_{1}\right)  \left(  b-a_{2}\right)
}{\left(  b-a_{1}-a_{2}\right)  \left(  b-1-a_{1}-a_{2}\right)  }-a_{1}%
\frac{b-a_{1}}{b-a_{1}-a_{2}}a_{2}\frac{b-a_{2}}{b-a_{1}-a_{2}},
\]
or%
\begin{equation}
\gamma_{1}=\frac{a_{1}a_{2}\left(  b-a_{1}\right)  \left(  b-a_{2}\right)
}{\left(  b-a_{1}-a_{2}\right)  ^{2}\left(  b-a_{1}-a_{2}-1\right)  },
\label{G1}%
\end{equation}
where we have used the formula \cite[15.5.1 ]{MR2723248}%
\[
\frac{d}{dz}\ _{2}F_{1}\left[
\begin{array}
[c]{c}%
a,\ b\\
c
\end{array}
;z\right]  =\frac{ab}{c}\ _{2}F_{1}\left[
\begin{array}
[c]{c}%
a+1,\ b+1\\
c+1
\end{array}
;z\right]  .
\]

When $z=1,$ the Laguerre-Freud equations (\ref{LF1})-(\ref{LF2}) decouple, and
we get%

\begin{equation}
u_{n}\nabla_{n}\left(  \beta_{n}-n\right)  =v_{n}\nabla_{n}\left(  \beta
_{n}+n\right)  , \label{LF11}%
\end{equation}%
\begin{equation}
\Delta_{n}\nabla_{n}\left[  \left(  b-a_{1}-a_{2}+2-2n\right)  \gamma
_{n}\right]  -\nabla_{n}\left(  \gamma_{n+1}+\gamma_{n}\right)  =u_{n}%
\nabla_{n}\left(  \beta_{n}-n\right)  , \label{LF21}%
\end{equation}
since in this case%
\[
u_{n}-v_{n}=b-a_{1}-a_{2}+2-2n.
\]

Solving for $\beta_{n}$ in (\ref{LF11}), we have%
\begin{equation}
\beta_{n}=\frac{2n+a_{1}+a_{2}-b-4}{2n+a_{1}+a_{2}-b}\beta_{n-1}%
-\allowbreak\frac{a_{1}+a_{2}+b}{2n+a_{1}+a_{2}-b}. \label{DEbeta}%
\end{equation}
As it is well known, the general solution of the initial value problem%
\[
y_{n+1}=c_{n}y_{n}+g_{n},\quad y_{n_{0}}=y_{0},
\]
is \cite[1.2.4]{MR2128146}%
\[
y_{n}=y_{0}%
{\displaystyle\prod\limits_{j=n_{0}}^{n-1}}
c_{j}+%
{\displaystyle\sum\limits_{k=n_{0}}^{n-1}}
\left(  g_{k}%
{\displaystyle\prod\limits_{j=k+1}^{n-1}}
c_{j}\right)  .
\]
Thus, the solution of (\ref{DEbeta}) is given by $\allowbreak$%
\begin{align*}
\beta_{n}  &  =\frac{\left(  a_{1}+a_{2}-b\right)  \left(  a_{1}%
+a_{2}-b-2\right)  }{\left(  2n+a_{1}+a_{2}-b\right)  \left(  2n+a_{1}%
+a_{2}-b-2\right)  }\beta_{0}\\
&  -\frac{\left(  a_{1}+a_{2}+b\right)  \left(  a_{1}+a_{2}-b+n-1\right)
}{\left(  2n+a_{1}+a_{2}-b\right)  \left(  2n+a_{1}+a_{2}-b-2\right)  }n,
\end{align*}
where we have used the identity%
\[%
{\displaystyle\prod\limits_{k=n_{0}}^{n_{1}}}
\frac{2n+K-2}{2n+K+2}=\frac{\left(  2n_{0}+K\right)  \left(  2n_{0}%
+K-2\right)  }{\left(  2n_{1}+K\right)  \left(  2n_{1}+K+2\right)  }.
\]
If we use the initial condition (\ref{beta0}), we conclude that%
\[
\beta_{n}=\frac{(b+2-a_{1}-a_{2})a_{1}a_{2}-n\left(  a_{1}+a_{2}+b\right)
\left(  n+a_{1}+a_{2}-b-1\right)  }{\left(  2n+a_{1}+a_{2}-b\right)
(2n+a_{1}+a_{2}-b-2)}\allowbreak.
\]

Re-writing (\ref{LF21}), we have%
\begin{gather*}
\left(  b-a_{1}-a_{2}-2n-1\right)  \gamma_{n+1}-2\left(  b-a_{1}%
-a_{2}-2n+2\right)  \gamma_{n}\\
+\left(  b-a_{1}-a_{2}-2n+5\right)  \gamma_{n-1}=u_{n}\nabla_{n}\left(
\beta_{n}-n\right)  .
\end{gather*}
Summing from $n=1,$ we get%
\begin{gather*}
\left(  b-a_{1}-a_{2}-2n-1\right)  \gamma_{n+1}+\left(  a_{1}+a_{2}%
-b+2n-3\right)  \gamma_{n}\\
+\left(  a_{1}+a_{2}-b+1\right)  \gamma_{1}=-%
{\displaystyle\sum\limits_{k=0}^{n-1}}
\beta_{k}+\beta_{n}^{2}-\beta_{0}^{2}\\
+b\left(  \beta_{n}-\beta_{0}-n\right)  -n\beta_{n}+\frac{n\left(  n-1\right)
}{2}.
\end{gather*}
The solution of this difference equation$\allowbreak$ with initial condition
(\ref{G1}) is%
\begin{align*}
\gamma_{n}  &  =-n\frac{(n+a_{1}-1)(n+a_{2}-1)(n+a_{1}-b-1)}{(2n+a_{1}%
+a_{2}-b-1)(2n+a_{1}+a_{2}-b-3)}\\
&  \times\frac{(n+a_{2}-b-1)(n+a_{1}+a_{2}-b-2)}{(2n+a_{1}+a_{2}-b-2)^{2}}.
\end{align*}

We summarize the results in the following proposition.

\begin{proposition}
The recurrence coefficients of the Hahn polynomials, orthogonal with respect
to the weight function%
\[
\rho\left(  x\right)  =\frac{\left(  a_{1}\right)  _{x}\left(  a_{2}\right)
_{x}}{x!\left(  b+1\right)  _{x}},
\]
with%
\[
\operatorname{Re}\left(  b-a_{1}-a_{2}\right)  >0,\quad b-a_{1}-a_{2}%
\neq1,2,\ldots,
\]
are given by%
\begin{equation}
\beta_{n}=\frac{(b+2-a_{1}-a_{2})a_{1}a_{2}-n\left(  a_{1}+a_{2}+b\right)
\left(  n+a_{1}+a_{2}-b-1\right)  }{\left(  2n+a_{1}+a_{2}-b\right)
(2n+a_{1}+a_{2}-b-2)}\allowbreak, \label{Hbeta}%
\end{equation}
and%
\begin{align}
\gamma_{n}  &  =-n\frac{(n+a_{1}-1)(n+a_{2}-1)(n+a_{1}-b-1)}{(2n+a_{1}%
+a_{2}-b-1)(2n+a_{1}+a_{2}-b-3)}\label{Hgamma}\\
&  \times\frac{(n+a_{2}-b-1)(n+a_{1}+a_{2}-b-2)}{(2n+a_{1}+a_{2}-b-2)^{2}%
}.\nonumber
\end{align}

\end{proposition}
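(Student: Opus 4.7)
The plan is to specialize the Laguerre-Freud system \eqref{LF1}--\eqref{LF2} to $z=1$, where a crucial simplification occurs: the factor $(1-z)$ on the left side of \eqref{LF1} vanishes, and the two equations decouple into \eqref{LF11} (involving only $\beta_n$) and \eqref{LF21} (which, once $\beta_n$ is known, is a linear difference equation for $\gamma_n$). So the strategy is to solve them in sequence.

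First I would handle the initial conditions. Both $\mu_0$ and $\mu_1$ in \eqref{GHmoments} are ${}_2F_1$ evaluations at $z=1$, which under the convergence assumption $\operatorname{Re}(b-a_1-a_2)>0$ can be summed in closed form by Gauss's theorem, yielding $\beta_0=\mu_1/\mu_0=a_1a_2/(b-a_1-a_2)$. For $\gamma_1$, I would take the limit $z\to 1^-$ in \eqref{GHg1}; the first term has the indeterminate form $\tfrac{0}{0}$, so I would apply L'H\^opital (equivalently, the differentiation formula for ${}_2F_1$ cited in the text) to evaluate it, and then combine with the second term over a common denominator to obtain \eqref{G1}.

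Next I would solve \eqref{LF11} for $\beta_n$. Substituting $u_n=\beta_n+\beta_{n-1}-n+b+1$ and $v_n=\beta_n+\beta_{n-1}+n-1+a_1+a_2$ and isolating $\beta_n$ produces the first-order linear recurrence \eqref{DEbeta}. I would then apply the general closed-form solution for $y_{n+1}=c_n y_n+g_n$, recognizing that the product $\prod c_j$ telescopes via the identity displayed in the text, and plug in the initial value $\beta_0$ to obtain \eqref{Hbeta}.

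Finally, with $\beta_n$ in hand, I would rewrite \eqref{LF21} explicitly, noting that $u_n-zv_n=u_n-v_n=b-a_1-a_2+2-2n$ at $z=1$, so the left side becomes $\Delta_n\nabla_n\bigl[(b-a_1-a_2+2-2n)\gamma_n\bigr]-\nabla_n(\gamma_{n+1}+\gamma_n)$. Summing the resulting three-term recurrence from $n=1$ to telescope the $\Delta_n\nabla_n$ and cancel one $\nabla_n$ reduces it to a first-order recurrence for $\gamma_n$ with inhomogeneous part expressible in $\beta_k$ and $\beta_n$; I would then invoke the same closed-form solution formula together with the initial value \eqref{G1} to obtain \eqref{Hgamma}. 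The main obstacle is the bookkeeping in this last step: verifying that the explicit sum $\sum_{k=0}^{n-1}\beta_k$, together with the quadratic-in-$n$ terms, collapses into the compact factorized form of $\gamma_n$. I would carry this out by partial fractions on the rational expression for $\beta_k$, which splits $\sum\beta_k$ into a constant plus a telescoping combination of $1/(2k+a_1+a_2-b)$ terms, making the final factorization transparent.
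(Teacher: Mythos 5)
Your proposal is correct and mirrors the paper's own derivation step for step: Gauss summation for $\beta_{0}$, the $z\to1^{-}$ limit of \eqref{GHg1} via the ${}_{2}F_{1}$ derivative formula for $\gamma_{1}$, decoupling at $z=1$ into \eqref{LF11}--\eqref{LF21}, solving the first-order recurrence \eqref{DEbeta} with the telescoping product to get \eqref{Hbeta}, and then summing \eqref{LF21} from $n=1$ and solving with initial value \eqref{G1} to get \eqref{Hgamma}. The only difference is cosmetic (your partial-fraction treatment of $\sum_{k}\beta_{k}$ is one workable way to do the bookkeeping the paper leaves implicit).
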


This family of orthogonal polynomials was introduced by Hahn in
\cite{MR0030647}. They have the hypergeometric representation \cite{MR0054094}%
\[
P_{n}\left(  x\right)  =\frac{\left(  a_{1}\right)  _{n}\left(  a_{2}\right)
_{n}}{\left(  n+a_{1}+a_{2}-b-1\right)  _{n}}\ _{3}F_{2}\left[
\begin{array}
[c]{c}%
-n,\ -x,\ n+a_{1}+a_{2}-b-1\\
a_{1},\ a_{2}%
\end{array}
;1\right]  ,
\]
from which (\ref{Hbeta}) and (\ref{Hgamma}) can be obtained using HolonomicFunctions.

As we observed in \cite{MR3492864}, the finite family of polynomials that are
usually called \textquotedblleft Hahn polynomials\textquotedblright\ in the
literature \cite[18.19]{MR2723248} correspond to the special case%
\[
a_{1}=\alpha+1,\quad a_{2}=-N,\quad b=-N-1-\beta.
\]

\section{Conclusions}

We have presented a method that allows the computation of the recurrence
coefficients of discrete orthogonal polynomials. In some cases, a closed-form
expression can be given. We plan to extend the results to include other
families of polynomials.

\newif\ifabfull\abfullfalse
\input apreambl

\end{document}